\newtheorem{theorem}{Theorem}
\newtheorem{lemma}[theorem]{Lemma}
\newtheorem{corollary}[theorem]{Corollary}
\theoremstyle{definition}
\newtheorem{example}[theorem]{Example}
\newcommand{\defin}[1]{\emph{#1}}
\newcommand{\setN}{\mathbb{N}}
\newcommand{\setZ}{\mathbb{Z}}
\newcommand{\xvec}{\mathbf{x}}
\newcommand{\thsup}{\textnormal{th}}
\newcommand{\symS}{S}
\newcommand{\SSYT}{\mathrm{SSYT}}
\newcommand{\Wcomp}{\mathrm{WCOMP}}
\newcommand{\wc}{\alpha} 
\newcommand{\schurS}{\mathrm{s}}
\newcommand{\monom}{\mathrm{m}}
\DeclareMathOperator{\length}{\ell}
\DeclareMathOperator{\cyc}{cyc}
\DeclareMathOperator{\cryss}{{\tilde{s}}}
\title{Free action and cyclic sieving on skew semi-standard Young tableaux}
\author{Per Alexandersson}
\address{Dept. of Mathematics, Royal Institute of Technology, SE-100 44 Stockholm, Sweden}
\email{per.w.alexandersson@gmail.com}
\keywords{Cyclic sieving, q-analogue, Schur polynomials, skew semi-standard Young tableaux}
\begin{document}

\begin{abstract}
In this note, we provide a short proof of Theorem 3.3 in the paper titled \emph{Crystals, semistandard tableaux and  cyclic sieving phenomenon}, by Y.-T.~Oh and E.~Park,
which concerns a cyclic sieving phenomenon on semi-standard Young tableaux.
We give a short proof of their result which extends to skew shapes. 
\end{abstract}

\maketitle


\section{Introduction}

The \defin{cyclic sieving phenomenon}, (CSP) was introduced 
by V.~Reiner, D.~Stanton and D.~White~\cite{ReinerStantonWhite2004}.
Given a set $X$, a cyclic group $C$ of order $n$ acting on $X$ and 
a polynomial $f(q)$, the triple $(X,C,f(q))$
is said to \defin{exhibit the cyclic sieving phenomenon} if 
for all $k \in \setZ$,
\[
 |\{ x \in X : g^k \cdot x = x \}| = f(\xi^k)
\]
where $g$ is a generator of $C$, and $\xi$ is a primitive $n^\thsup$ root of unity.
To this date, there many different instances of 
the cyclic sieving phenomenon has been proved,
see for example the survey by B.~Sagan~\cite{Sagan2011}.

Let $\SSYT(\lambda, n)$ denote the set of skew semi-standard Young tableaux
with shape $\lambda$ and maximal entry at most $n$.
Note that this set is empty unless $\length(\lambda)\leq n$.
Furthermore, let $n(\lambda)\coloneqq \sum_{j} (j-1) \lambda_j$.
The \defin{Schur polynomial} $\schurS_\lambda(\xvec)$
is defined as
\[
\schurS_\lambda(x_1,\dotsc,x_n) \coloneqq \sum_{T \in \SSYT(\lambda,n)} \xvec^T
\]
where $\xvec^T$ is the monomial $x_1^{w_1(T)} \dotsm x_n^{w_n(T)}$
and $w_i(T)$ is the number of entries in $T$ equal to $i$,
see \cite{StanleyEC2, Macdonald1995} for more background.
The symmetric group $\symS_n$ act on $\SSYT(\lambda,n)$
via type $A$ crystal reflection operators,
see M.~Kashiwaraand T.~Nakashima~\cite{KashiwaraNakashima1994} 
and M.~Shimozono's~\cite{Shimozono2005} survey for an excellent background.
The book \cite{BumpSchilling2015} provides a more thorough introduction.

Y.-T.~Oh and E.~Park~\cite{OhPark2019}, show that the triple
\begin{equation}\label{eq:OhPark}
\left(
 \SSYT(\lambda,n), \langle c_n \rangle, 
 q^{-n(\lambda)}\schurS_\lambda(1,q,q^2,\dotsc,q^{n-1})
 \right)
\end{equation}
exhibits the cyclic sieving phenomenon, 
where $c_n = \cryss_1 \cryss_2\dotsm \cryss_{n-1}$ is a product of crystal
reflection operators. Note that $c_n$ has order $n$.
In this note, we generalize \eqref{eq:OhPark} to skew shapes,
and refine the statement to smaller sets of tableaux, see \cref{thm:refinedCSP,cor:main}.
The CSP in \eqref{eq:OhPark} is in stark contrast with the conjectured CSP in
\cite{AlexanderssonAmini2018},
where we conjecture that there is a cyclic group $C$ of order $n$,
acting on $\SSYT(n\lambda/n\mu,m)$ and 
\[
\left(
 \SSYT(n\lambda/n\mu,m), C, \schurS_{n\lambda/n\mu}(1,q,q^2,\dotsc,q^{m-1})
 \right)
\]
is a CSP triple (with no restriction on $m$). 
Here, $n\lambda$ denotes element-wise multiplication $(n\lambda_1,\dotsc,n\lambda_\ell)$.

\subsection{Preliminaries}

Let $\Wcomp(m,n)$ denote the set of \defin{weak compositions},
which is the set of vectors $(\wc_1,\dotsc,\wc_n)$ in $\setN_{\geq 0}^n$
such that $\wc_1+\wc_2+\dotsb+\wc_n = m$.
Given a composition $\wc$, we let $|\wc|$ denote the sum of the entries.
We use the same notation for integer partitions,
and we let $|\lambda/\mu|$ denote the difference $|\lambda|-|\mu|$
whenever $\lambda/\mu$ is a skew shape.

Given a weak composition $\wc$, let $\SSYT(\lambda/\mu,\wc)$ denote 
the subset of $\SSYT(\lambda/\mu, n)$ where each tableau has 
exactly $\wc_i$ entries equal to $i$.
Note that this set is non-empty only if $|\lambda/\mu|=|\wc|$.
If $\mu = \emptyset$, we simply write $\SSYT(\lambda,\wc)$,
and $|\SSYT(\lambda,\wc)|$ is the \emph{Kostka coefficient} $K_{\lambda,\wc}$.
The \defin{skew Schur polynomial} $\schurS_{\lambda/\mu}(\xvec)$
is defined analogously to the Schur polynomials,
as a sum over skew SSYT:
\[
 \schurS_{\lambda/\mu}(x_1,\dotsc,x_n) \coloneqq \sum_{T \in \SSYT(\lambda/\mu,n)} \xvec^T
 = \sum_{\nu \vdash |\lambda/\mu| } K_{\lambda/\mu,\nu} \monom_{\nu}(x_1,\dotsc,x_n).
\]
For a weak composition $\wc = (\wc_1,\dotsc,\wc_n)$, let 
$\cyc_r(\wc)$ denote the \defin{cyclic shift}
$
 (\wc_{1+r},\wc_{2+r},\dotsc,\wc_{n+r})
$
where indices are computed modulo $n$. 
In particular, $\wc = \cyc_n(\wc)$.

\begin{lemma}
Let $\wc \in \Wcomp(m,n)$ and suppose that $\gcd(m,n)=1$. 
Then all cyclic shifts $\cyc_1(\wc)$, $\cyc_2(\wc)$, $\dotsc$, $\cyc_n(\wc)$
are different.
\end{lemma}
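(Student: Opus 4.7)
The plan is a standard orbit/period argument. Suppose for contradiction that two cyclic shifts coincide, say $\cyc_r(\wc) = \wc$ for some $1 \leq r < n$. Then $\wc_i = \wc_{i+r}$ for all $i$, where indices are taken modulo $n$.

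Set $d \coloneqq \gcd(r,n)$. First I would observe that the cyclic subgroup of $\setZ/n\setZ$ generated by $r$ is exactly the subgroup of multiples of $d$, so the condition $\wc_i = \wc_{i+r}$ implies $\wc_i = \wc_{i+d}$ for all $i$. In other words, $\wc$ is periodic with period $d$, and the indices $\{1,\dotsc,n\}$ split into $d$ equivalence classes each of size $n/d$, on each of which $\wc$ is constant.

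The next step is to compute $m = |\wc|$. Summing over the $d$ orbits, each contributing $n/d$ copies of a common value, we obtain
\[
 m = \frac{n}{d}\sum_{i=1}^{d} \wc_i,
\]
so $n/d$ divides $m$. Since $n/d$ also divides $n$, it divides $\gcd(m,n)=1$, which forces $n/d = 1$, i.e.\ $d = n$. But $d = \gcd(r,n) \leq r < n$, a contradiction. Therefore no such $r$ exists and the $n$ shifts $\cyc_1(\wc),\dotsc,\cyc_n(\wc)$ are pairwise distinct.

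There is no real obstacle here; the only point to be careful about is the elementary number-theoretic step that the subgroup generated by $r$ in $\setZ/n\setZ$ equals $d\cdot\setZ/n\setZ$, which justifies passing from period $r$ to period $d=\gcd(r,n)$. Everything else is a direct count.
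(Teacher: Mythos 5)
Your proof is correct and is essentially the argument in the paper: the paper writes $\wc$ as $k>1$ repetitions of a block of length $l$ with $kl=n$ and concludes $k \mid \gcd(m,n)$, which is exactly your $n/d \mid \gcd(m,n)$ with $d=\gcd(r,n)$. You merely make explicit two steps the paper leaves implicit, namely the reduction from ``two shifts coincide'' to ``some nontrivial shift fixes $\wc$'' and the passage from period $r$ to period $\gcd(r,n)$.
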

\begin{proof}
Suppose two cyclic shifts of $\wc$
are equal. Then 
$
\wc = (a_1,\dotsc,a_l,a_1,\dotsc,a_l,\dotsc, a_1,\dotsc,a_l)
$
where $kl=n$ and $k>1$.
Then $k(a_1+\dotsb+a_l)=m$ so it follows that $k|m$.
But then $k | \gcd(m,n)$, a contradiction.
\end{proof}

\begin{corollary}\label{cor:disjoint}
Let $\lambda/\mu$ be a skew shape and $\wc \in \Wcomp(|\lambda/\mu|,n)$
such that $\gcd(|\lambda/\mu|,n)=1$. 
Then the sets $\{ \SSYT(\lambda/\mu, \cyc_r(\wc)) \}_{r=1}^n$
are pairwise disjoint.
\end{corollary}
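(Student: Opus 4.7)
The plan is to deduce the corollary directly from the preceding lemma by observing that $\SSYT(\lambda/\mu, \wc')$ records tableaux of a \emph{fixed} weight $\wc'$, so different weight compositions automatically give disjoint sets of tableaux.

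First I would set $m = |\lambda/\mu| = |\wc|$ and note that, by definition, each $T \in \SSYT(\lambda/\mu, \cyc_r(\wc))$ has weight exactly $\cyc_r(\wc)$, i.e.\ for every $i$ the number of entries of $T$ equal to $i$ is the $i^\thsup$ coordinate of $\cyc_r(\wc)$. Consequently, if $T$ lay in both $\SSYT(\lambda/\mu,\cyc_r(\wc))$ and $\SSYT(\lambda/\mu,\cyc_s(\wc))$, the two compositions would have to coincide coordinate-wise, giving $\cyc_r(\wc) = \cyc_s(\wc)$.

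Next I would invoke the previous lemma: since $\gcd(m,n)=1$, the $n$ cyclic shifts $\cyc_1(\wc), \cyc_2(\wc),\dotsc,\cyc_n(\wc)$ are pairwise distinct weak compositions. Combining this with the preceding observation rules out $r \neq s$ in the paragraph above, so the family $\{\SSYT(\lambda/\mu,\cyc_r(\wc))\}_{r=1}^n$ is pairwise disjoint.

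There is no real obstacle here; this is essentially a bookkeeping step. The only thing to be a bit careful about is the degenerate case where $\SSYT(\lambda/\mu,\cyc_r(\wc))$ is empty for some $r$ (for example when $\cyc_r(\wc)$ has too many nonzero parts relative to $\length(\lambda)$), but empty sets are trivially disjoint from everything, so this does not affect the argument.
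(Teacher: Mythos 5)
Your proposal is correct and is exactly the argument the paper intends: the corollary is stated without proof precisely because it follows immediately from the preceding lemma (the $n$ cyclic shifts of $\wc$ are pairwise distinct when $\gcd(|\lambda/\mu|,n)=1$) together with the observation that a tableau determines its weight, so sets of tableaux with distinct weights are disjoint. Your remark about empty sets is a harmless extra precaution.
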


\begin{lemma}\label{lem:shiftedResidues}
Let $\wc \in \Wcomp(m,n)$ such that $\gcd(m,n)=1$.
Then $z_1,z_2,\dotsc,z_n$ defined via
 $z_r \coloneqq \sum_{j=1}^n j \wc_{j+r}$ for $r=1,2,\dotsc,n$
are all different mod $n$.
\end{lemma}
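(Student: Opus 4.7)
The natural plan is to analyze how $z_r$ changes as $r$ increments by $1$, and show the increment is a nonzero constant modulo $n$. Since the $z_r$ will then form an arithmetic progression modulo $n$ with common difference coprime to $n$, distinctness follows immediately.

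More concretely, I would reindex the sum defining $z_{r+1}$: writing $k = j+1$ (mod $n$), one has
\[
 z_{r+1} = \sum_{j=1}^n j\, \wc_{j+r+1} = \sum_{k=1}^n (k-1)\, \wc_{k+r} + n\, \wc_{r+1},
\]
where the extra term $n\, \wc_{r+1}$ comes from shifting the index $k=n+1$ back to $k=1$ and accounting for the coefficient being $n$ rather than $0$. Expanding gives
\[
 z_{r+1} = z_r - \sum_{k=1}^n \wc_{k+r} + n\, \wc_{r+1} = z_r - m + n\, \wc_{r+1},
\]
since $\sum_k \wc_{k+r} = m$. Therefore $z_{r+1} \equiv z_r - m \pmod n$, and inductively $z_r \equiv z_1 - (r-1)m \pmod n$.

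Because $\gcd(m,n) = 1$, multiplication by $m$ is a bijection on $\setZ/n\setZ$, so the values $(r-1)m \pmod n$ for $r=1,2,\dotsc,n$ are pairwise distinct. Hence so are the residues of $z_1, \dotsc, z_n$, which is the claim.

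I do not expect any real obstacle here; the only subtlety is bookkeeping the boundary term when reindexing the cyclic sum (that is, correctly recovering the $n\, \wc_{r+1}$ correction). Once that identity is in hand, the coprimality hypothesis does all the remaining work.
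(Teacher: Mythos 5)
Your proof is correct and takes essentially the same approach as the paper, which simply asserts the recurrence $z_{r+1}\equiv z_r+m \pmod{n}$ and then invokes coprimality; you supply the reindexing details that the paper leaves to the reader. (Your sign, $z_{r+1}\equiv z_r-m\pmod{n}$, is the accurate one where the paper writes $+m$, but since $\gcd(-m,n)=\gcd(m,n)=1$ the conclusion is unaffected either way.)
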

\begin{proof}
 It is easy to see that for all $r=1,2,\dotsc,n-1$, 
 we have the relation $z_{r+1} \equiv z_r + m \mod n$.
 Since $\gcd(m,n)=1$, the statement now follows from standard group theory.
\end{proof}

\begin{corollary}\label{cor:sum}
Let $\lambda/\mu$ be a skew shape and $\wc \in \Wcomp(|\lambda/\mu|,n)$
such that $\gcd(|\lambda/\mu|,n)=1$. 
Then 
\begin{align}\label{eq:blah}
 \sum_{r=1}^n \sum_{T \in \SSYT(\lambda/\mu,\cyc_r(\wc))} 
 q^{w_1(T)+w_2(T)+\dotsb + w_n(T)}  
\end{align}
is a multiple of $[n]_q = 1+q+q^2+\dotsb+ q^{n-1}$ $\mod q^n-1$.
\end{corollary}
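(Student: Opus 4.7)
The plan is to identify each inner summand as a single power of $q$ depending only on $r$, then collect and invoke \cref{lem:shiftedResidues}.

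The first step is to notice that for any $T \in \SSYT(\lambda/\mu,\cyc_r(\wc))$ the content is fixed: $w_j(T)=\wc_{j+r}$ for each $j$. Hence the $q$-exponent in the summand, which I read as the weighted sum $\sum_{j=1}^n j\,w_j(T)$, equals $\sum_{j=1}^n j\,\wc_{j+r} = z_r$, independently of the choice of $T$. The inner sum therefore collapses to $q^{z_r}\cdot |\SSYT(\lambda/\mu,\cyc_r(\wc))|$.

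The second step is to use that $\schurS_{\lambda/\mu}$ is a symmetric function, so the skew Kostka number $|\SSYT(\lambda/\mu,\wc')|$ depends only on the multiset of entries of the composition $\wc'$. In particular, $|\SSYT(\lambda/\mu,\cyc_r(\wc))|=K_{\lambda/\mu,\wc}$ is independent of $r$, and the entire double sum factors as
\[
 K_{\lambda/\mu,\wc}\sum_{r=1}^n q^{z_r}.
\]

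Finally, \cref{lem:shiftedResidues} asserts that $z_1,\dotsc,z_n$ is a complete set of residues modulo $n$. Since $q^n\equiv 1\pmod{q^n-1}$, we obtain $\sum_{r=1}^n q^{z_r}\equiv 1+q+\dotsb+q^{n-1}=[n]_q\pmod{q^n-1}$, and the claim follows. The only delicate point is the first step---matching the exponent with the definition of $z_r$; once this bookkeeping is done the corollary is an immediate consequence of \cref{lem:shiftedResidues} together with the symmetry of the skew Schur function.
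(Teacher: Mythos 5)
Your proof is correct and follows essentially the same route as the paper's: collapse the inner sum using the fixed content, use the symmetry of the skew Schur function to pull out the $r$-independent Kostka number, and apply \cref{lem:shiftedResidues}. Your explicit remark that the exponent must be read as the weighted sum $\sum_j j\,w_j(T)$ (rather than the literal $\sum_j w_j(T)$) is exactly the reading the paper's own proof uses.
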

\begin{proof}
By unraveling the definitions, \eqref{eq:blah} is equal to
 \[
 \sum_{r=1}^n  |\SSYT(\lambda/\mu,\cyc_r(\wc))| 
 q^{\wc_{1+r} + 2\wc_{2+r} + \dotsb + n \wc_{n+r}}
\]
where indices are taken modulo $n$.
since the cardinality of $\SSYT(\lambda/\mu,\cyc_r(\wc))$ does not depend on
the ordering of the entries in $\wc$, the expression \eqref{eq:blah} 
is equal to
\[
 |\SSYT(\lambda/\mu,\wc)| \sum_{r=1}^n q^{\wc_{1+r} + 2\wc_{2+r} + \dotsb + n \wc_{n+r}}.
\]
By \cref{lem:shiftedResidues}, the sum is now equal to $[n]_q$ mod $(q^n-1)$,
which proves the assertion.
\end{proof}

\subsection{Crystals and main proofs}

We have that $\symS_n$ act on $\SSYT(\lambda/\mu, n)$
via type $A$ \defin{crystal reflection operators}, 
see definition in M.~Shimozono's survey \cite{Shimozono2005}.
In particular, the simple reflection $\cryss_i \in \symS_n$
gives a bijection
\[
 \cryss_i : \SSYT(\lambda/\mu,\wc) \to \SSYT(\lambda/\mu, s_i \wc),
\]
where $s_i$ act on $\wc$ by interchanging entries at positions $i$ and $i+1$.
Let $c_n \coloneqq \cryss_1 \cryss_2\dotsm \cryss_{n-1} \in \symS_n$,
and let $\wc \in \Wcomp(|\lambda/\mu|,n)$.
One can then show that $c_n$ gives a bijection 
\begin{equation}\label{eq:crystal}
 c_n : \SSYT(\lambda/\mu,\wc) \to \SSYT(\lambda/\mu,\cyc_1(\wc))
\end{equation}
and $\langle c_n \rangle \subseteq \symS_n$ is a cyclic group of order $n$.
These properties of $c_n$ in \eqref{eq:crystal} are the only properties of crystal operators 
that we shall use in this note.
Note that the $c_n$-action is different from promotion obtained as a product of 
Bender--Knuth involutions, as promotion does not have order $n$ in general.

\begin{example}
Let $\lambda/\mu = 322/1$ and $\wc = (2,1,2,1)$.
Then 
\[
\ytableausetup{boxsize=1.0em}
 \left\{ 
 \ytableaushort{\none13,13,24}, 
 \ytableaushort{\none24,13,24}
 \right\}
 \quad \text{and}\quad
 \left\{ 
 \ytableaushort{\none14,13,24},
 \ytableaushort{\none13,12,24},
 \ytableaushort{\none23,13,24},
 \ytableaushort{\none24,13,34}
 \right\}
\]
are two orbits under $c_4$. 
Note that $\gcd(6,4)=2$ here, so \cref{cor:disjoint,lem:shiftedResidues,cor:sum}
do not apply. 
\end{example}

\begin{theorem}\label{thm:refinedCSP}
Let $\lambda/\mu$ be a skew shape with $m = |\lambda/\mu|$ 
and $\wc \in \Wcomp(|\lambda/\mu|,n)$ such that $\gcd(m,n)=1$.
Furthermore let $X$ be the (disjoint) union
\[
 X = \bigcup_{r=1}^n \SSYT(\lambda/\mu,\cyc_r(\wc)).
\]
Then the triple
\[
\left(
 X, \langle c_n \rangle, \sum_{T \in X} q^{w_1(T)+w_2(T)+\dotsb + w_n(T)}
\right)
\]
exhibit the cyclic sieving phenomenon.
\end{theorem}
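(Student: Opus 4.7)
The plan is to verify the defining identity of the cyclic sieving phenomenon term by term: for each integer $k$, compare the number of fixed points of $c_n^k$ on $X$ with the value $f(\xi^k)$ of the generating polynomial
\[
f(q) \coloneqq \sum_{T \in X} q^{w_1(T)+w_2(T)+\dotsb+w_n(T)}.
\]
The two sides are already almost entirely packaged by \eqref{eq:crystal} and by \cref{cor:disjoint,cor:sum}, so most of the work is to assemble them.

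First I would use \eqref{eq:crystal} to observe that $c_n$ restricts to a bijection $\SSYT(\lambda/\mu,\cyc_r(\wc)) \to \SSYT(\lambda/\mu,\cyc_{r+1}(\wc))$, so $c_n^k$ maps the $r^\thsup$ content class into the $(r+k)^\thsup$ one. By \cref{cor:disjoint} the $n$ content classes are pairwise disjoint, so whenever $k \not\equiv 0 \pmod{n}$ the map $c_n^k$ can have no fixed point on $X$. Since $\langle c_n\rangle$ has order $n$, in the remaining case $n \mid k$ the element $c_n^k$ acts as the identity, and the number of fixed points equals $|X|$.

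On the polynomial side, the computation in \cref{cor:sum} rephrases as the congruence
\[
f(q) \equiv |\SSYT(\lambda/\mu,\wc)| \cdot [n]_q \pmod{q^n-1},
\]
using that $|\SSYT(\lambda/\mu,\cyc_r(\wc))|$ is independent of $r$. Hence $f(\xi^k) = 0$ whenever $n \nmid k$, while for $n \mid k$ one has $f(1) = n \cdot |\SSYT(\lambda/\mu,\wc)| = |X|$. Both sides of the CSP identity therefore agree for every $k$, which proves the theorem.

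There is no real obstacle to overcome here; the crux of the argument was already handled in \cref{cor:sum} via the residue-distinctness \cref{lem:shiftedResidues}. The one point that genuinely deserves care is the role of the hypothesis $\gcd(m,n)=1$: it is needed simultaneously to make the $n$ content classes distinct (so that $\langle c_n\rangle$ acts on $X$ with true order $n$, cycling them) and to make the exponents $\sum_j j\wc_{j+r}$ represent distinct residues modulo $n$ (so that the polynomial side collapses to a multiple of $[n]_q$). Without this hypothesis neither the fixed-point count nor the $q$-evaluation behaves so cleanly, which is consistent with the counterexample given by the example preceding the theorem.
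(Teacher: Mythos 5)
Your proposal is correct and follows essentially the same route as the paper: use \cref{cor:disjoint} together with \eqref{eq:crystal} to see that the action is free (so $c_n^k$ has no fixed points unless $n \mid k$), and use \cref{cor:sum} to see that $f(q)$ is a multiple of $[n]_q$ modulo $q^n-1$ (so $f(\xi^k)=0$ unless $n\mid k$, while $f(1)=|X|$). You merely spell out the final comparison of the two sides more explicitly than the paper does, which is a fair expansion rather than a different argument.
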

\begin{proof}
By \cref{cor:disjoint}, it follows that 
every $c_n $-orbit in $X$ has size $n$.
By \cref{cor:sum}, we have that 
$f(q) \coloneqq \sum_{T \in X} q^{w_1(T)+w_2(T)+\dotsb + w_n(T)}$ is a multiple of $[n]_q$.
It follows that for all $k \in \setZ$,
\[
 |\{ T \in  X : (c_n)^k \cdot T = T \}| = f(\xi^k) 
\]
where $\xi$ is a $n^\thsup$ root of unity 
so the triple is indeed a CSP-triple.
\end{proof}

\begin{corollary}\label{cor:main}
Let $\gcd(|\lambda/\mu|,n)=1$. Then the triple
\begin{equation*}
 \left( \SSYT(\lambda/\mu,n), \langle c_n \rangle, 
 \schurS_{\lambda/\mu}(1,q,\dotsc,q^{n-1}) \right) 
\end{equation*}
exhibits the cyclic sieving phenomenon. 
Moreover, if $\mu=\emptyset$,
\begin{equation}\label{eq:ohPark2}
 \left( \SSYT(\lambda,n), \langle c_n \rangle, 
 q^{-n(\lambda)} \schurS_{\lambda}(1,q,\dotsc,q^{n-1}) \right) 
\end{equation}
is also a CSP-triple.
\end{corollary}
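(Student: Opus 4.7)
The plan is to follow the strategy of \cref{thm:refinedCSP} and verify the CSP directly for the Schur polynomial specialization by partitioning $\SSYT(\lambda/\mu,n)$ according to cyclic orbits of weight. Setting $m=|\lambda/\mu|$, I would partition $\Wcomp(m,n)$ into orbits $\mathcal{O}$ under the cyclic shift $\cyc_1$, choose a representative $\wc_{\mathcal{O}} \in \mathcal{O}$, and define $X_{\mathcal{O}} \coloneqq \bigcup_{r=1}^n \SSYT(\lambda/\mu, \cyc_r(\wc_{\mathcal{O}}))$. Since $\gcd(m,n)=1$, the first lemma of the preliminaries guarantees each orbit has size exactly $n$; \cref{cor:disjoint} ensures the blocks are pairwise disjoint; their union is $\SSYT(\lambda/\mu,n)$; and by \eqref{eq:crystal} each block is $c_n$-stable.

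For the first part, I would compare the fixed-point count of $(c_n)^k$ with the value $\schurS_{\lambda/\mu}(1,\xi^k,\dotsc,\xi^{(n-1)k})$ at each power of a primitive $n$-th root of unity $\xi$. On each $X_{\mathcal{O}}$, the first lemma implies $\cyc_k(\wc_{\mathcal{O}}) \neq \wc_{\mathcal{O}}$ whenever $n \nmid k$, so $(c_n)^k$ has no fixed points in that block unless $n \mid k$; summing gives $|\{T : (c_n)^k T = T\}| = |\SSYT(\lambda/\mu,n)|$ if $n \mid k$ and $0$ otherwise. On the polynomial side, $\schurS_{\lambda/\mu}(1,\dotsc,1) = |\SSYT(\lambda/\mu,n)|$ handles $k \equiv 0 \pmod n$. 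For $n \nmid k$, I would group monomials in $\schurS_{\lambda/\mu}(1,q,\dotsc,q^{n-1})$ by weight, use that $|\SSYT(\lambda/\mu,\cyc_r(\wc_{\mathcal{O}}))|$ is independent of $r$, and reduce each orbit contribution to an inner sum of the form $\sum_{r=1}^n q^{\sum_j (j-1)\wc_{j+r}}$. A short residue calculation analogous to \cref{lem:shiftedResidues} shows $\sum_j (j-1)\cyc_r(\wc)_j \equiv C(\wc) - rm \pmod n$, so this inner sum at $\xi^k$ reduces to a geometric sum $\xi^{kC}\sum_r \xi^{-krm}$, which vanishes because $\gcd(m,n)=1$ makes $\xi^{-km}$ a nontrivial $n$-th root of unity.

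For the second part, the prefactor $q^{-n(\lambda)}$ equals $1$ at $q=1$, so the $k \equiv 0 \pmod n$ case is unaffected; at any other power $\xi^k$, the Schur specialization already vanishes, and multiplication by $\xi^{-kn(\lambda)}$ preserves this. That the result is a genuine polynomial with non-negative integer coefficients follows from the principal-specialization (hook-content) formula. No serious obstacle is anticipated; the main technical point is the mod-$n$ residue calculation, but this is essentially the same trick already used in the proof of \cref{cor:sum}, now applied to the exponent $\sum_j (j-1)\wc_j$ rather than $\sum_j j\wc_j$.
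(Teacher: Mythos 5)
Your argument is correct and follows essentially the same route as the paper: decompose $\SSYT(\lambda/\mu,n)$ by cyclic orbits of weights, note that all $c_n$-orbits have size $n$, and kill the evaluations at nontrivial roots of unity via the mod-$n$ residue computation. The only difference is cosmetic: the paper invokes \cref{thm:refinedCSP} summed over all $\wc$ to handle $\schurS_{\lambda/\mu}(q,q^2,\dotsc,q^n)$ and then passes to $\schurS_{\lambda/\mu}(1,q,\dotsc,q^{n-1})$ by dividing off the factor $q^{|\lambda/\mu|}$, whereas you redo the residue calculation directly for the exponent $\sum_j(j-1)\wc_j$; both give the same vanishing, and your treatment of the $q^{-n(\lambda)}$ prefactor matches the paper's.
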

\begin{proof}
By summing over all possible compositions $\wc$ in  $\Wcomp(|\lambda/\mu|,n)$
in \cref{thm:refinedCSP}, we immediately have that
\[
 \left( \SSYT(\lambda/\mu,n), \langle c_n \rangle, 
 \schurS_{\lambda/\mu}(q,q^2,\dotsc,q^n) \right) 
\]
 is a CSP-triple and $\schurS_{\lambda/\mu}(q,q^2,\dotsc,q^n)$ evaluates to $0$
 for all $q=\xi,\xi^2,\dotsc,\xi^{n-1}$ whenever $\xi$ is a primitive $n^\thsup$
 root of unity. It is also straightforward to see that 
 $
 \schurS_{\lambda/\mu}(q,q^2,\dotsc,q^n) = 
 q^{|\lambda/\mu|} \schurS_{\lambda/\mu}(1,q,\dotsc,q^{n-1}),
 $
 so $\schurS_{\lambda/\mu}(1,q,\dotsc,q^{n-1})$ must evaluate to $0$
 for all $q=\xi,\xi^2,\dotsc,\xi^{n-1}$ as well.
 Furthermore, since entries in row $j$ in a non-skew semi-standard Young tableau 
 have value at least $j$, $q^{-n(\lambda)}\schurS_{\lambda}(1,q,\dotsc,q^{n-1})$ 
 is a polynomial in $q$. These observations prove the two statements.
 \end{proof}

We can actually say something much more general, 
where a cyclic sieving phenomenon can be constructed 
for any homogeneous, Schur-positive symmetric function.
\begin{corollary}
Let $f(\xvec) = \sum_{T \in X} \xvec^T$ be a homogeneous symmetric function
of degree $m$, such that the Schur expansion
$
 f(\xvec) = \sum_{\lambda \vdash m} d_{\lambda} \schurS_\lambda(\xvec)
$
is non-negative with $d_{\lambda} \in \setN$.
Furthermore, suppose $\gcd(m,n)=1$ and that there is a bijection $\psi$:
\[
\psi : X \to \bigcup_\lambda [d_\lambda]\times \SSYT(\lambda,n).
\]
Moreover, suppose that there is a cyclic group action $\langle c_n \rangle$ of order $n$ 
act on $X$ in such a way that it acts (via $\psi$) 
as the product of crystal reflections $c_n$ on $\SSYT(\lambda,n)$.
Then 
\[
 \left(X, \langle c_n \rangle, f(1,q,q^2,\dotsc,q^{n-1}) \right)
\]
is a CSP-triple, and every orbit under $c_n$ has size $n$.
\end{corollary}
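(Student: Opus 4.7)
The plan is to reduce the statement to \cref{cor:main} by transporting structure through the bijection $\psi$. First I would set $Y \coloneqq \bigcup_\lambda [d_\lambda]\times \SSYT(\lambda,n)$ and transfer the $\langle c_n \rangle$-action from $X$ to $Y$ along $\psi$; by hypothesis the transferred action is, on each summand, the identity on the index set $[d_\lambda]$ and the crystal product $c_n$ on $\SSYT(\lambda,n)$. A small but essential observation is that $\psi$ is weight-preserving on individual tableaux: both $f(\xvec) = \sum_{T \in X} \xvec^T$ and $\sum_\lambda d_\lambda \schurS_\lambda(\xvec)$ are the monomial-generating function of the same set (via $\psi$), so corresponding tableaux carry the same monomial after specialising to $n$ variables.

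Next, I would invoke \cref{cor:main} separately for each $\lambda \vdash m$: since $\gcd(m,n)=1$ holds uniformly, each triple $(\SSYT(\lambda,n), \langle c_n \rangle, \schurS_\lambda(1,q,\dotsc,q^{n-1}))$ is a CSP-triple. Summing the count of $c_n^k$-fixed points across the factors of $Y$ and pulling back through $\psi$ (together with weight-preservation) then gives
\[
|\{x\in X : c_n^k \cdot x = x\}| = \sum_{\lambda \vdash m} d_\lambda\, \schurS_\lambda(1,\xi^k,\dotsc,\xi^{(n-1)k}) = f(1,\xi^k,\dotsc,\xi^{(n-1)k})
\]
for every $k \in \setZ$, where $\xi$ is a primitive $n^\thsup$ root of unity. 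This is exactly the CSP identity required.

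For the orbit-size claim I would observe that the proof of \cref{thm:refinedCSP} already shows that every $c_n$-orbit on $\SSYT(\lambda,n)$ has size exactly $n$: \cref{cor:disjoint} forces such an orbit to meet $n$ distinct sets of the form $\SSYT(\lambda,\cyc_r(\wc))$, while $c_n$ having order $n$ caps the size at $n$. Since the transferred action on $Y$ fixes the index set $[d_\lambda]$, this property carries over to $Y$ and hence to $X$. I foresee no serious obstacle; the argument is a direct reduction, and the only point needing mild care is the weight-preservation of $\psi$, which is implicit in the statement but indispensable for matching polynomials in $q$.
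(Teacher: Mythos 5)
Your reduction to \cref{cor:main} summand by summand is correct and is exactly the argument the paper intends (the corollary is stated there without proof, as an immediate consequence of the earlier results). One caveat: your claim that $\psi$ is weight-preserving does not follow from the equality of generating functions (two sets can have the same generating function while a particular bijection scrambles weights), but it is also not needed --- the identity $f(1,q,\dotsc,q^{n-1})=\sum_{\lambda} d_\lambda\,\schurS_\lambda(1,q,\dotsc,q^{n-1})$ is a consequence of the hypothesized Schur expansion alone, and the fixed-point count transfers through $\psi$ by equivariance alone.
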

Note that such a bijection $\psi$ and action $c_n$ can in principle be found for any 
Schur-positive symmetric function.
In particular, we can use the Littlewood--Richardson rule on 
$\schurS_{\lambda/\mu}(x_1,\dotsc,x_n)$, to express it as a non-negative 
sum of non-skew Schur polynomials and then apply previous corollary.

The previous corollary can then be applied to the following families of symmetric functions,
whose Schur expansion can be proved by an explicit bijection $\psi$,
and there is a type $A$ crystal structure on the underlying combinatorial objects:
 Modified Hall--Littlewood symmetric functions, \cite{KaliszewskiMorse2017},
 type $A$ and Stanley symmetric functions, \cite{MorseSchilling2015},
 type $C$ Stanley symmetric functions, \cite{HawkesParamonovSchilling2017},
 specialized  non-symmetric Macdonald polynomials, see \cite{AssafGonzalez2008},
 and (some) dual $k$-Schur functions, \cite{MorseSchilling2015}.

\bibliographystyle{alphaurl}
\bibliography{bibliography}

\end{document}